\newcommand{\ses}[7]{\ensuremath{#1 \longrightarrow #2 \stackrel{#3}{\longrightarrow} #4 \stackrel{#5}{\longrightarrow} #6 \longrightarrow #7}}
\def\ns#1{\mathbb{#1}}
\def\N{\ns{N}}
\def\Z{\ns{Z}}
\def\Q{\ns{Q}}
\def\R{\ns{R}}
\def\C{\ns{C}}
\def\F{\ns{F}}
\DeclareMathOperator{\spch}{\mathcal{S}}
\DeclareMathOperator{\Isom}{Isom}
\DeclareMathOperator{\GL}{GL}
\DeclareMathOperator{\Irr}{Irr}
\DeclareMathOperator{\Soc}{Soc}
\numberwithin{equation}{section}
\newtheorem{thm}{Theorem}[section]
\newtheorem{thm*}{Theorem}
\newtheorem{lm}[thm]{Lemma}
\newtheorem{cor}[thm]{Corollary}
\newtheorem{prop}[thm]{Proposition}
\newtheorem{rem}{Remark}[section]
\renewcommand{\Re}{\operatorname{Re}}
\renewcommand{\Im}{\operatorname{Im}}
\author{R. Lutowski}
\address{Institute of Mathematics, University of Gda\'{n}sk, Gda\'{n}sk, Poland}%
\email{rafal.lutowski@mat.ug.edu.pl}%
\subjclass[2010]{Primary: 20H15, Secondary: 20C20, 57S30}
\keywords{Bieberbach group, flat manifold, K\"ahler manifold, homogeneous representation, holonomy representation}
\title{Flat manifolds with homogeneous holonomy representation}
\date{}
\begin{document}

\maketitle

\begin{abstract}
We show that a rational holonomy representation of any flat manifold except torus must have at least two non-equivalent irreducible subrepresentations. As an application we show that if a K\"ahler flat manifold is not a torus then its holonomy representation is reducible.
\end{abstract}

\section{Introduction}

Let $n \in \N$. Let $\Gamma \subset \Isom(\R^n)$ be a \emph{crystallographic group}, i.e. a discrete and cocompact subgroup if isometries of the $n$-dimensional euclidean space $\R^n$. By the first Bieberbach theorem (see \cite[Theorem 2.1]{Sz12}, \cite[Theorems 3.1]{Ch86}) $\Gamma$ fits into the following short exact sequence
\begin{equation}
\label{eq:ses}
\ses{0}{M}{}{\Gamma}{\pi}{G}{1}
\end{equation}
where $G$ is a finite group and $M$ is a maximal abelian normal subgroup of $\Gamma$.
Moreover, $M$ is free abelian group of rank $n$ and it admits a structure of a faithful $G$-module, defined by the conjugations in $\Gamma$. The representation $\varphi \colon G \to \GL(M)$ which corresponds to this module is called the \emph{integral holonomy representation} of $\Gamma$. 

If $\Gamma$ is torsion-free we call it a \emph{Bieberbach group}. In this case the orbit space $X = \R^n/\Gamma$ is a closed Riemannian manifold with sectional curvature equal to zero, a \emph{flat manifold} for short. Moreover $\Gamma$, which is isomorphic to the fundamental group of $X$, determines the manifold up to affine equivalence. It is worth to notice that -- up to conjugation in $\GL(n,\R)$ -- $\varphi(G)$ is equal to the holonomy group of $X$. 


Let $\F$ be a field. We will say that $G$-module $M$ is $\F$-\emph{homogeneous} if $\F G$-module
$\F \otimes_\Z M$ has only one homogeneous component, i.e. all of its irreducible submodules are isomorphic.
In the particular case $\F = \Q$ we will call $M$ \emph{homogeneous}. The same nomenclature applies to the corresponding representations of $G$.

In this paper we prove the following theorem:
\begin{thm*}
\label{thm:main}
The only Bieberbach groups with homogeneous integral holonomy representation are the fundamental groups of flat tori.
\end{thm*}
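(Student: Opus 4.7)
I argue by contradiction: assume $\Gamma$ is a Bieberbach group with non-trivial holonomy group $G$ and homogeneous holonomy representation, so that $V := M \otimes_\Z \Q \cong U^k$ for some simple $\Q G$-module $U$. Because $G$ acts faithfully on the torsion-free abelian group $M$, it acts faithfully on $V$; since an element of $G$ is trivial on $V$ if and only if it is trivial on $U$, the action of $G$ on $U$ is faithful as well. The module $U$ cannot be the trivial one, for then $G$ would act trivially on $V$, forcing $G = 1$. Hence $U$ is a faithful, non-trivial simple $\Q G$-module, and in particular $V^G = 0$.

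Next I would invoke the standard cohomological criterion for torsion-freeness: the extension \eqref{eq:ses} is classified by a class $\alpha \in H^2(G, M)$, and $\Gamma$ is torsion-free if and only if, for every $g \in G$ of prime order $p$, the restriction $\res^G_{\langle g \rangle} \alpha \in H^2(\langle g \rangle, M) \cong M^g / N_g M$ is non-zero, where $N_g := 1 + g + \cdots + g^{p-1}$. A necessary condition for this non-vanishing is that $M^g$, and hence $V^g$ and $U^g$, be non-zero; equivalently, the restriction of $U$ to each cyclic subgroup of prime order contains the trivial representation.

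The core of the plan is to derive a contradiction from this last statement using the faithfulness of $U$. When $Z(G) \neq 1$ the argument is short: take $z \in Z(G)$ of prime order via Cauchy's theorem and decompose $U \otimes_\Q \overline{\Q}$ into its absolutely irreducible, Galois-conjugate summands. The central element $z$ acts by a root-of-unity scalar on each summand, and these scalars form a single Galois orbit; if any of them were $1$, all would be, so $z$ would lie in the kernel of $U$, contradicting faithfulness. Hence $z$ has no $1$-eigenvector on $U \otimes_\Q \overline{\Q}$, so $U^z = 0$, contradicting the previous paragraph.

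The principal obstacle is the case $Z(G) = 1$. Here I would pass to a minimal non-trivial normal subgroup $N$ of $G$ and analyse $U|_N$ via Clifford's theorem; since $U^N$ is a $G$-submodule of the simple module $U$ and cannot equal $U$ (as $U$ is faithful and $N \neq 1$), we have $U^N = 0$, and only non-trivial simple $\Q N$-summands appear in $U|_N$. The goal is to exhibit an element of prime order in $N$ acting without fixed vectors on $U$, thereby recovering the contradiction. When $N$ is elementary abelian, this reduces to showing that the $G$-orbit of characters of $N$ occurring in $U|_N$ does not cover $N$ through their kernels; when $N$ is a product of non-abelian simple groups, a more delicate character-theoretic or lattice-theoretic analysis will be required. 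Carrying this case analysis through uniformly is the step where I expect the main difficulty of the proof to lie.
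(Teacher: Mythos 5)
Your argument splits into a part that works and a part that cannot work as planned. The case $Z(G) \neq 1$ is handled correctly: a central element $z$ of prime order acts on each absolutely irreducible constituent of $U \otimes_\Q \overline{\Q}$ by a Galois-conjugate root of unity, so faithfulness forces $U^z = 0$, hence $H^2(\langle z\rangle, M) \cong M^z/N_zM = 0$, contradicting torsion-freeness. This is essentially the paper's treatment of its $p$-group case (there phrased via Clifford's theorem for a cyclic normal subgroup and the vanishing of $H^2(C,M)$), and your version of that step is sound.

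The genuine gap is the case $Z(G)=1$, and it is not merely a matter of ``carrying the case analysis through'': the only consequence of torsion-freeness you have extracted --- that $U^g \neq 0$ for every $g \in G$ of prime order --- is too weak to produce a contradiction there. Take $G = S_4$ and $U$ the standard $3$-dimensional rational representation: $U$ is faithful and absolutely irreducible (so certainly homogeneous), $Z(G)=1$, and every element of prime order (transpositions, double transpositions, $3$-cycles) has eigenvalue $1$ on $U$, so $U^g \neq 0$ throughout; in particular no element of the minimal normal subgroup $V_4$ acts without fixed vectors. Hence no argument of the shape you propose --- exhibiting a prime-order element with $U^g=0$ --- can rule out this module, yet the theorem asserts it must be ruled out. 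What actually excludes it is the full strength of the requirement that the class $\alpha$ itself (not just the group $H^2(\langle g\rangle,M)$) restrict non-trivially to every prime-order subgroup. The paper, following Hiss--Szczepa\'nski, converts this into the statement that every $\vartheta \in \Irr(G,M)$ lies in the principal $p$-block for every prime $p$ dividing $|G|$ and occupies a prescribed position on the Brauer tree, invokes the (classification-dependent) fact that no irreducible character of a non-abelian simple group satisfies all these constraints to force $\Soc(G)$ to be elementary abelian, and then plays two primes against each other (via $N \subset O_{q'}(G) \subset \ker\vartheta$) to reduce to the $p$-group case. Without some substitute for this block-theoretic input, your plan for $Z(G)=1$ cannot be completed.
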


We use the theorem to prove that the holonomy representation of any K\"ahler flat manifold may not be $\C$-homogeneous and in particular -- irreducible.
Both of these results are generalization of the one presented in \cite{HS90} where the authors consider the case of $\Q \otimes_\Z M$ being irreducible.

The paper is organized as follows: in Section 2 we recall a notion of lattices and sublattices. Section 3 describes a reduction of the problem of homogeneous flat manifolds to the case where the holonomy group is simple. It is in fact a generalization of \cite[Section 2]{HS90} to the homogeneous case. The next two sections are devoted to the proof of Theorem \ref{thm:main} and its application to the class of K\"ahler flat manifolds.

\section{Lattices}

\begin{rem}
Since we consider tensor product over integers only, we will omit the subscript $\Z$ from now on.
\end{rem}

Let $G$ be a finite group. 
For the convenience of our further considerations we introduce a notion of a $G$-\emph{lattice}, i.e. a $G$-module which is a free $\Z$-module of finite rank.
A $G$-submodule $M'$ of $M$ is called a \emph{sublattice} if it is pure as $\Z$-submodule. As a consequence of this definition, we will call $M$ an \emph{irreducible} lattice if its only submodule of lower rank is $0$ or, equivalently, $\Q \otimes M$ is a simple $\Q G$-module. This approach allows us to build a descending chain
\[
M = M_k \supset M_{k-1} \supset \ldots \supset M_0 = 0
\]
of $G$-modules such that $M_{i-1}$ is a maximal sublattice of $M_i$ and hence $M_i/M_{i-1}$ is an irreducible $G$-lattice, for $i=1,\ldots,k$. Hence we can speak about composition series and composition factors for lattices (see \cite[\S 73]{CR62} for more detailed description).

\section{Reduction}

As in the previous section, let $G$ be a finite group.
For any $G$-lattice $M$ denote by $\Irr(G,M)$ the set of irreducible characters arising from constituents of $\C \otimes M$.  Note that if $M$ is a homogeneous $G$-lattice and $S$ is a composition factor of $M$ then $\Irr(G,M) = \Irr(G,S)$. We immediately get the following generalization of \cite[Lemma 2.1]{HS90}:

\begin{lm}
\label{lm:pb}
Let $M$ be a homogeneous $G$-lattice. Let $p$ be a prime and $\Z_p$ -- the ring of $p$-adic integers. Suppose that $\Z_p \otimes M$ contains an indecomposable direct summand in the principal $\Z_pG$-block. Then every irreducible constituent of $\C \otimes M$ is in the principal $p$-block of $G$.
\end{lm}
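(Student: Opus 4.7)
The plan is to combine two standard block-theoretic observations. First, an indecomposable $\Z_pG$-lattice lies in a unique $p$-block of $G$, and every absolutely irreducible character appearing after base change to $\overline{\Q_p}$ lies in that same block. Second, under the homogeneity hypothesis on $M$, the set $\Irr(G,M)$ is a single $\mathrm{Gal}(\overline{\Q}/\Q)$-orbit, and the principal block is stable under this Galois action; so if a single member of $\Irr(G,M)$ belongs to the principal block, all of them do.

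For the first observation I would recall the block decomposition $\Z_pG = \bigoplus_B e_B\Z_pG$ into primitive central idempotents. Fixing compatible embeddings $\overline{\Q}\hookrightarrow\C$ and $\overline{\Q}\hookrightarrow\overline{\Q_p}$, the given indecomposable summand $N\subset\Z_p\otimes M$ satisfies $e_{B_0}N=N$, where $B_0$ is the principal block. Hence every absolutely irreducible constituent of $\overline{\Q_p}\otimes N$ has central character agreeing with the image of $e_{B_0}$, i.e.\ belongs to $B_0$. These constituents sit inside the set of constituents of $\overline{\Q_p}\otimes\overline{\Q_p}\otimes M$, which under the fixed embeddings coincides with $\Irr(G,M)$, so this yields at least one $\chi_0\in\Irr(G,M)$ with $\chi_0\in B_0$.

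For the second observation, homogeneity of $M$ means that $\Q\otimes M$ is isotypic; writing $S$ for its unique simple summand, we have $\Irr(G,M)=\Irr(G,S)$, and the right-hand side is exactly one $\mathrm{Gal}(\overline{\Q}/\Q)$-orbit of ordinary irreducible characters. To verify Galois-invariance of $B_0$, I would use the classical description $\chi\in B_0$ iff $\omega_\chi(\widehat{C})\equiv |C|\pmod{\mathfrak{p}}$ for every conjugacy class sum $\widehat{C}$ and some prime $\mathfrak{p}$ above $p$ in the ring of algebraic integers. Any $\sigma\in\mathrm{Gal}(\overline{\Q}/\Q)$ sends $\mathfrak{p}$ to another prime above $p$ while fixing $|C|\in\Z$, so the defining congruence survives and $\sigma(\chi_0)\in B_0$.

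Combining the two steps, every $\chi\in\Irr(G,M)$ is a Galois conjugate of $\chi_0\in B_0$, so $\chi\in B_0$, which is the conclusion. The only delicate point is coordinating block decompositions over $\Z_p$, $\overline{\Q_p}$ and $\C$; this is handled by the compatible choice of embeddings and is routine once standard block theory (e.g.\ \cite{CR62}) is invoked, so I do not expect a genuine obstacle beyond this bookkeeping.
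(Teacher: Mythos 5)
Your proposal is correct and follows essentially the argument the paper intends (it gives no written proof, deferring to \cite[Lemma 2.1]{HS90} together with the observation that homogeneity makes $\Irr(G,M)=\Irr(G,S)$ a single Galois orbit): an indecomposable $\Z_pG$-summand in the principal block forces one constituent of $\C\otimes M$ into that block, and Galois-stability of the principal block then captures the whole orbit. The bookkeeping of embeddings and the independence of the block partition from the choice of prime above $p$ are standard, as you say, so there is no gap.
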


%

Let $\Gamma$ be a crystallographic group which fits into the sequence \eqref{eq:ses} and let $\alpha \in H^2(G,M)$ be the cohomology class corresponding to this extension. Then $\Gamma$ is torsion-free if and only if $\alpha$ is \emph{special}, i.e. the restriction of $\alpha$ to every subgroup of $G$ of prime order is non-zero (see \cite[Theorem III.2.1]{Ch86} for example).

Note that in \cite[Lemma 2.2]{HS90} the irreducibility of a lattice was only needed to use \cite[Lemma 2.1]{HS90}. Hence we can generalize it to the homogeneous case:

\begin{lm}
\label{lm:se}
Let $M$ be a homogeneous $G$-lattice such that $H^2(G,M)$ contains a special element. Let $S$ denote a simple component in the socle $\Soc(G)$ (the product of minimal normal subgroups) of $G$. Then we have:
\begin{enumerate}[label={\normalfont(\alph{enumi})}]
\item \label{lm:hs:a} If $\vartheta \in \Irr(G,M)$, then $\vartheta$ is in the principal $p$-block for every prime $p$ dividing $|G|$.
\item \label{lm:hs:b} If $\psi \in \Irr(S,M)$, then $\psi$ is in the principal $p$-block for every prime $p$ dividing $|G|$.
\item \label{lm:hs:c} Let $p$ be a prime dividing $|S|$ such that a Sylow $p$-subgroup of $S$ is cyclic. Then there is $\Theta \in \Irr(S,M)$ which has the following position on the oriented Brauer tree:

{
\begin{equation}
\label{eq:brauer}
\newlength{\rad}
\setlength{\rad}{2cm}
\begin{tikzpicture}[baseline=(current  bounding  box.center)]
\fill (0,0) circle (2pt) node[below]{$1_S$};
\fill (\rad,0) circle (2pt);
\fill (\rad,0)++(110:\rad) circle (2pt);
\fill (\rad,0)++(100:.75\rad) circle (.5pt);
\fill (\rad,0)++( 90:.75\rad) circle (.5pt);
\fill (\rad,0)++( 80:.75\rad) circle (.5pt);
\fill (\rad,0)++(70:\rad) circle (2pt);
\draw (\rad,0) -- +(70:\rad);
\draw (\rad,0) -- +(110:\rad);
\fill (2\rad,0) circle (2pt) node[below] {$\Theta$};
\draw (2\rad,0) -- +(70:\rad);
\draw (2\rad,0) -- +(110:\rad);
\fill (2\rad,0)++(110:\rad) circle (2pt);
\fill (2\rad,0)++(100:.75\rad) circle (.5pt);
\fill (2\rad,0)++( 90:.75\rad) circle (.5pt);
\fill (2\rad,0)++( 80:.75\rad) circle (.5pt);
\fill (2\rad,0)++(70:\rad) circle (2pt);
\fill (3\rad,0) circle (2pt);
\draw (0,0) -- (3\rad,0);
\end{tikzpicture}
\end{equation}

}

\end{enumerate}
\end{lm}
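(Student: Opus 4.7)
The plan is to follow the template of the proof of \cite[Lemma 2.2]{HS90}, noting that in that argument the irreducibility of the lattice entered \emph{only} through the application of \cite[Lemma 2.1]{HS90}. Since our Lemma \ref{lm:pb} already covers the homogeneous case, the three parts of the conclusion can be verified by the same strategy, handled in succession.

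For part \ref{lm:hs:a}, I would fix a prime $p$ dividing $|G|$ and exploit the specialness of $\alpha$: there exists a subgroup $C \leq G$ of order $p$ with $\alpha|_C \neq 0$ in $H^2(C,M)$. Passing to $p$-adic coefficients yields $H^2(C, \Z_p \otimes M) \neq 0$, so some indecomposable $\Z_p G$-summand $L$ of $\Z_p \otimes M$ has non-projective restriction to $C$; a standard vertex--block argument places such an $L$ in the principal $\Z_p G$-block, after which Lemma \ref{lm:pb} puts every $\vartheta \in \Irr(G,M)$ in the principal $p$-block of $G$. For part \ref{lm:hs:b}, since $S$ is normal in $G$ (being a component of $\Soc(G)$), every $\psi \in \Irr(S,M)$ is a constituent of the restriction $\vartheta|_S$ of some $\vartheta \in \Irr(G,M)$; combining \ref{lm:hs:a} with the classical fact that the principal block of $G$ covers only the principal block of each normal subgroup yields \ref{lm:hs:b}.

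For part \ref{lm:hs:c}, cyclicity of the Sylow $p$-subgroup of $S$ makes the principal $p$-block of $S$ a block of cyclic defect, whose Brauer tree has the general shape depicted with $1_S$ at a distinguished leaf; by \ref{lm:hs:b}, every $\psi \in \Irr(S,M)$ labels a vertex on this tree. To pin down $\Theta$ at the specified position, I would apply Green correspondence to the indecomposable summand produced in the proof of \ref{lm:hs:a} and invoke the Brauer--Dade classification of indecomposables in blocks of cyclic defect to read off the backbone distance of $\Theta$ from $1_S$. The main obstacle is precisely this last step: parts \ref{lm:hs:a} and \ref{lm:hs:b} amount to routine block-theoretic bookkeeping given Lemma \ref{lm:pb}, but translating abstract membership in the principal block into the precise combinatorial position of $\Theta$ on the tree requires careful Green-correspondence tracking together with the full structure theorems for cyclic-defect blocks.
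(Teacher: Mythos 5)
Your proposal matches the paper's approach exactly: the paper gives no independent proof of this lemma, but simply observes that the argument of \cite[Lemma 2.2]{HS90} carries over verbatim once \cite[Lemma 2.1]{HS90} is replaced by its homogeneous analogue, Lemma \ref{lm:pb} --- which is precisely your opening observation. The ``main obstacle'' you flag in part \ref{lm:hs:c} is not addressed in this paper either; it is delegated entirely to the cyclic-defect analysis already carried out in \cite{HS90}, so your sketch is, if anything, more detailed than the source.
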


Denote by $\spch(G)$ the set of those complex irreducible characters $\chi$ of $G$ such that for every prime $p$ dividing $|G|$:
\begin{enumerate}[label=(\arabic{enumi})]
\item $\chi$ is in the principal $p$-block;
\item If Sylow $p$-subgroup of $G$ is cyclic, then $\chi$ has the position given by the position of $\Theta$ on the Brauer tree \eqref{eq:brauer} of the principal $p$-block.
\end{enumerate}

The base finding of \cite{HS90}, giving its main result, may be stated as follows:
\begin{prop}[{\cite[Section 3]{HS90}}]
Let $S$ be a non-abelian finite simple group. Then $\spch(S) = \emptyset$.
\end{prop}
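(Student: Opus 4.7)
The plan is to invoke the Classification of Finite Simple Groups and, for each non-abelian finite simple group $S$, to exhibit a prime $p$ dividing $|S|$ at which conditions (1)--(2) defining $\spch(S)$ cannot both hold. Condition (1) alone forces $\chi$ into the principal $p$-block for \emph{every} $p\st|S|$, while condition (2) becomes rigid whenever the Sylow $p$-subgroup is cyclic: it requires the Brauer tree of the principal block to contain a vertex in the specific interior position of $\Theta$ shown in \eqref{eq:brauer}, namely a non-endpoint vertex whose neighbour on the path towards $1_S$ is itself adjacent to an exceptional cluster. In particular, whenever the Brauer tree is a star, a short segment, or otherwise has too few ordinary vertices (as is frequently the case for low-rank Lie type groups), no such $\Theta$ can exist and the criterion fails at once.

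Concretely, I would choose $p$ so that a Sylow $p$-subgroup of $S$ is cyclic, because then condition~(2) is active and the Brauer tree of the principal block is combinatorially constrained. For the alternating groups $A_n$ with $n\ge 5$, any prime $p$ with $n/2<p\le n$ yields a cyclic Sylow $p$-subgroup of order $p$, and the $p$-hook description of blocks of symmetric and alternating groups produces the Brauer tree explicitly, so that one can check that no character occupies the $\Theta$-position. For the groups of Lie type $G(q)$ of defining characteristic $r$, I would pick $p\ne r$ dividing a cyclotomic value $\Phi_d(q)$ for a $d$ making the corresponding Sylow $p$-torus cyclic; results of Fong--Srinivasan together with Brauer tree computations of Hiss, Lux and collaborators then furnish the relevant tree and allow the elimination of each case. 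For the twenty-six sporadic groups one reads the required block and Brauer-tree information directly from the Atlas and the modular Atlas and handles each group individually.

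The principal obstacle is simply the volume of case analysis: the groups of Lie type break into many subcases depending on rank, characteristic, and the congruence of $q$ modulo $p$, and one has to be careful with small-rank exceptional isomorphisms (for instance $A_5\cong\mathrm{PSL}_2(5)$ or $A_8\cong\mathrm{PSL}_4(2)$) so as neither to duplicate nor to miss a small case. This bookkeeping is precisely what is carried out in \cite[Section 3]{HS90}; since the definition of $\spch(S)$ depends only on abstract character-theoretic and block data of $S$, independently of any lattice $M$, the argument given there applies verbatim to the present formulation.
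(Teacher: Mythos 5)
The paper does not prove this proposition itself: it is quoted directly from \cite[Section 3]{HS90}, and your proposal correctly identifies and sketches the strategy of that reference, namely a CFSG-driven case analysis (alternating, Lie type, sporadic) using Brauer trees of principal blocks at primes with cyclic Sylow subgroups, before ultimately deferring to \cite{HS90} for the bookkeeping, exactly as the paper does. The only caveat worth recording is that your plan of eliminating each $S$ via a single well-chosen prime is sometimes too optimistic; in \cite{HS90} the exclusion of a character occasionally requires intersecting the principal-block and tree-position constraints coming from several primes simultaneously, which the definition of $\spch(S)$ permits.
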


Since for every finite group its socle is a product of simple groups, we immediately get the following corollary.

\begin{cor}
\label{cor:socle}
Let $M$ be a homogeneous $G$-lattice such that $H^2(G,M)$ contains a special element. Then $\Soc(G)$ is a product of elementary abelian groups.
\end{cor}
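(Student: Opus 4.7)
The plan is to derive a contradiction from the Proposition applied to the simple components of $\Soc(G)$. Recall that $\Soc(G)$ is the direct product of the minimal normal subgroups of $G$, and each minimal normal subgroup is itself a direct product of pairwise isomorphic simple groups. When those simples are abelian they are isomorphic to some $C_p$, and the minimal normal subgroup is an elementary abelian $p$-group; otherwise the simples are non-abelian. Consequently, $\Soc(G)$ fails to be a product of elementary abelian groups precisely when some simple component $S \leq \Soc(G)$ in the sense of Lemma \ref{lm:se} is a non-abelian finite simple group.

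Suppose, for contradiction, that such an $S$ exists. I would apply Lemma \ref{lm:se} directly to this $S$. Part \ref{lm:hs:b} supplies condition (1) in the definition of $\spch(S)$: every character in $\Irr(S,M)$ lies in the principal $p$-block for every prime $p$ dividing $|G|$, and in particular for every prime dividing $|S|$. Part \ref{lm:hs:c} supplies condition (2): whenever $p \mid |S|$ has cyclic Sylow $p$-subgroup in $S$, some $\Theta \in \Irr(S,M)$ occupies the distinguished position of the Brauer tree indicated in \eqref{eq:brauer}. Together these yield a character of $S$ lying in $\spch(S)$, so $\spch(S) \neq \emptyset$. This contradicts the Proposition, which asserts $\spch(S) = \emptyset$ for every non-abelian finite simple group. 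Hence no simple component of $\Soc(G)$ is non-abelian, and the corollary follows.

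The main obstacle I foresee is the ``combining'' step in the previous paragraph: Lemma \ref{lm:se}\ref{lm:hs:c}, read literally, produces for each cyclic-Sylow prime $p$ a possibly different witness $\Theta_p \in \Irr(S,M)$, whereas membership in $\spch(S)$ demands a single $\Theta$ satisfying the position condition uniformly in $p$. I would resolve this by arguing that the homogeneity of $M$ as a $G$-lattice forces $\Irr(G,M)$ to be a single Galois orbit of irreducibles, and that the restriction from $G$ to $S$ combined with the Galois-equivariance of the Brauer-tree structure implies that every element of $\Irr(S,M)$ already sits at the distinguished position for each relevant $p$; so $\Theta_p$ may be taken independent of $p$. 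This is the technical point deserving the most care; once it is settled, the contradiction with the Proposition is immediate, and the structural description of $\Soc(G)$ drops out with no further calculation.
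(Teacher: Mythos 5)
Your proof is exactly the paper's: the corollary is presented there as an immediate consequence of Lemma \ref{lm:se}\ref{lm:hs:b}--\ref{lm:hs:c} and the Proposition, obtained by assuming a non-abelian simple component $S$ of $\Soc(G)$ and exhibiting an element of $\spch(S)$, which is precisely what you do. The uniformity-of-$\Theta$ subtlety you raise is well spotted but is equally unaddressed by the paper (which only says ``we immediately get''); it is harmless in practice because the non-existence argument of \cite{HS90} pins down, for each non-abelian simple $S$, a failure of the conditions at some particular prime, so a single per-prime witness $\Theta_p$ already produces the contradiction.
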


\section{Proof of Theorem \ref{thm:main}}

In the case of non $p$-groups of the proof we follow \cite[Section 4]{HS90}.

Let $n \in \N$, $\Gamma \subset \Isom(\R^n)$ be a Bieberbach group which fits into the short exact sequence \eqref{eq:ses} and $X = \R^n/\Gamma$ be a homogeneous flat manifold, i.e. $M$ is a homogeneous $G$-lattice. Denote by $\chi$ its character.

Assume that $G$ is a non-trivial group. Then $\Soc(G)$ is non-trivial. Now take any prime $p$ dividing $|\Soc(G)|$ and a Sylow $p$-subgroup $N$ of $\Soc(G)$. By Corollary \ref{cor:socle} $N$ is characteristic in the socle, hence it is normal in $G$.

If $q \neq p$ is another prime dividing the order of $G$ then, by Lemma \ref{lm:se}, every $\vartheta \in \Irr(G,M)$ is in the principal $q$-block and so, by \cite[Lemma IV.4.12]{F82}, has $N \subset O_{q'}(G)$ in its kernel. But if $k$ is the composition length of $M$ then
\[
\chi = k \left( \sum_{\vartheta \in \Irr(G,M)} \vartheta \right)
\]
has $N$ in its kernel and $M$ is not a faithful $G$-module.

On the other hand, if $G$ is a $p$-group and $C$ is a cyclic and normal subgroup of $G$ then, by the Clifford's theorem (see \cite[Theorem 49.2]{CR62}), the restriction $\chi'$ of $\chi$ to $C$ equals
\[
\chi' = \varphi_1+\varphi_2+\ldots+\varphi_l
\]
where $\varphi_1,\ldots,\varphi_l$ are characters arising from irreducible components of the $\Q C$-module $\Q \otimes M$. Moreover, they are pairwise conjugated, i.e.
\begin{equation}
\label{eq:conjugated}
\forall_{1 \leq i,j \leq l} \exists_{g \in G} \forall_{c \in C} \varphi_i(c) = \varphi_j(gcg^{-1}).
\end{equation}
By \cite[Theorem 7.1]{ML95} if $H^2(C,M)\neq 0$ then there exists $1 \leq i \leq l$ s.t. $\varphi_i$ is the trivial character. But then the formula \eqref{eq:conjugated} shows that $\varphi_i$ is trivial for every $1 \leq i \leq l$. And again we get that $M$ is not a faithful $G$-module.

The above considerations show that $G$ must be the trivial group and hence $X$ is a flat $n$-dimensional torus.

\section{K\"ahler flat manifolds}

Let $\Gamma$ be a discrete, cocompact and torsion-free subgroup of $U(n) \ltimes \C^n$. Then $X = \C^n/\Gamma$ is a K\"ahler flat manifold with $\pi_1(X) \cong \Gamma$ (see \cite[Proposition 7.1]{Sz12}). If $\mathcal{B} \colon U(n) \ltimes \C^n \to O(2n) \ltimes \R^{2n}$ is a homomorphism given by
\[
\mathcal{B}(A,a) = 
\left(
\begin{bmatrix}
\Re(A) & -\Im(A)\\
\Im(A) &  \Re(A)
\end{bmatrix},
\begin{bmatrix}
\Re(a)\\
\Im(a)
\end{bmatrix}
\right)
\]
then $\mathcal{B}(\Gamma) \subset \Isom(\R^{2n})$ is a Bieberbach group. Moreover if $G$ is a holonomy group and $\varphi \colon G \to \GL(n,\C)$ is a holonomy representation of $X$ then the map $G \to \GL(2n,\R)$ defined by
\[
g \mapsto 
\begin{bmatrix}
\Re(\varphi(g)) & -\Im(\varphi(g))\\
\Im(\varphi(g)) & \Re(\varphi(g))
\end{bmatrix}
\]
is equivalent to the (integral) holonomy representation of the flat manifold $\R^{2n}/\mathcal{B}(\Gamma)$. We get

\begin{thm*}
Holonomy representation of a K\"ahler flat manifold, which is not a flat torus, contains at least two $\C$-homogeneous components. In particular it is reducible.
\end{thm*}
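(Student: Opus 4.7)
The plan is to prove the contrapositive: if the holonomy representation $\varphi \colon G \to \GL(n,\C)$ of the K\"ahler flat manifold $X = \C^n/\Gamma$ is $\C$-homogeneous, then $X$ is a flat torus. The excerpt already provides the bridge to Theorem~\ref{thm:main}: the map $\mathcal{B}$ produces a Bieberbach group $\mathcal{B}(\Gamma) \subset \Isom(\R^{2n})$ whose integral holonomy representation is the $\Z G$-lattice $M$ underlying the realification of $\varphi$. Since $\mathcal{B}$ is injective and preserves the translation-quotient decomposition, the holonomy groups of $\Gamma$ and $\mathcal{B}(\Gamma)$ coincide, and $\Gamma$ is the fundamental group of a flat torus if and only if $\mathcal{B}(\Gamma)$ is. So it is enough to show that $M$ is $\Q$-homogeneous and then to invoke the Main Theorem.

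The key computation is the standard identification
\[
\C \otimes M \;\cong\; \varphi \oplus \overline{\varphi}
\]
of $\C G$-modules, obtained from $\C \otimes_\Z M \cong \C \otimes_\R (\R \otimes M)$ and the classical decomposition of the complexification of a realification. If $\varphi$ is $\C$-homogeneous, with every irreducible constituent of character $\chi \in \Irr(G)$, then every irreducible $\C G$-constituent of $\C \otimes M$ has character in $\{\chi, \overline{\chi}\}$.

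To promote $\C$-homogeneity of $\varphi$ to $\Q$-homogeneity of $M$, I would invoke the standard correspondence between $\Q$-irreducible characters of $G$ and $\mathrm{Gal}(\overline{\Q}/\Q)$-orbits of complex irreducible characters. Complex conjugation lies in this Galois group, so $\chi$ and $\overline{\chi}$ belong to a single orbit and contribute to a single homogeneous $\Q G$-component of $\Q \otimes M$. Therefore $M$ is a $\Q$-homogeneous $G$-lattice; Theorem~\ref{thm:main} then forces $G$ to be trivial, making $\Gamma$ free abelian and $X$ a flat torus. I do not anticipate any serious obstacle: both steps above are classical and short. The only detail worth verifying carefully is that the holonomy groups of $\Gamma$ and $\mathcal{B}(\Gamma)$ truly coincide, but this is immediate from the injectivity of $\mathcal{B}$ and its compatibility with the short exact sequence~\eqref{eq:ses}.
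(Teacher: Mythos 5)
Your proposal is correct and follows essentially the same route as the paper: the paper's entire proof is the one-line assertion that $\C$-homogeneity of $\varphi$ forces the integral holonomy representation of $\mathcal{B}(\Gamma)$ to be homogeneous, after which Theorem \ref{thm:main} applies. You have simply supplied the justification the paper leaves implicit, namely $\C \otimes M \cong \varphi \oplus \overline{\varphi}$ together with the fact that $\chi$ and $\overline{\chi}$ lie in one Galois orbit and hence in one rational homogeneous component.
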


\begin{proof}
If $\varphi$ is $\C$-homogeneous then the integral holonomy representation of $\mathcal{B}(\Gamma)$ is homogeneous.
\end{proof}

\section*{Acknowledgment}

The author would like to thank Marek Hałenda, Gerhard Hiss and Andrzej Szczepański for helpful discussions.

%
%
%


\end{document}